\documentclass[12pt]{amsart}
\usepackage{amssymb,amsmath}

\newif\iflong
\longtrue

\iflong
\long\def\bpf#1\epf{\begin{proof}#1\end{proof}}
\else
\long\def\bpf#1\epf{} 
\fi

\newcommand{\lcm}{\operatorname{lcm}}
\newcommand{\mytab}[3]{\begin{table}[!htp]\caption{#1}\label{#2}\begin{tabular}{|c||r|r|r|r|r|r|}
\hline $n$ & 8 & 16 & 32 & 64 & 128 & 256\\ \hline\hline #3 \hline \end{tabular} \end{table}}

\newcommand{\vphi}{\varphi}

\newcommand{\len}{\operatorname{len}}
\newcommand{\op}[2]{\bigl(#1,#2\bigr)}
\newcommand{\act}[2]{{{}^{#1}\!{#2}}}

\newcommand{\x}{\times}

\newcommand{\F}{\mathbb{F}}
\newcommand{\sbst}{\subseteq}

      \newenvironment{changemargin}[2]{\begin{list}{}{
         \setlength{\topsep}{0pt}\setlength{\leftmargin}{0pt}
         \setlength{\rightmargin}{0pt}
         \setlength{\listparindent}{\parindent}
         \setlength{\itemindent}{\parindent}
         \setlength{\parsep}{0pt plus 1pt}
         \addtolength{\leftmargin}{#1}\addtolength{\rightmargin}{#2}
         }\item }{\end{list}}

\newtheorem{thm}{Theorem}
\newtheorem{prop}[thm]{Proposition}
\newtheorem{cor}[thm]{Corollary}
\newcommand{\bcor}{\begin{cor}}
\newcommand{\ecor}{\end{cor}}
\newtheorem{lem}[thm]{Lemma}
\newtheorem{conj}[thm]{Conjecture}

\newtheorem{notat}[thm]{Notation}
\newtheorem{prob}[thm]{Problem}
\theoremstyle{definition}
\newtheorem{defn}[thm]{Definition}
\newtheorem{alg}[thm]{Algorithm}
\theoremstyle{remark}
\newtheorem{rem}[thm]{Remark}
\newtheorem{as}[thm]{Assumption}
\newtheorem{exa}[thm]{Example}

\newcommand{\inv}{^{-1}}
\newcommand{\mx}[1]{\left (\begin{matrix}#1\end{matrix}\right )}
\newcommand{\be}{\begin{enumerate}}
\newcommand{\ee}{\end{enumerate}}
\newcommand{\bi}{\begin{itemize}}
\newcommand{\ei}{\end{itemize}}
\newcommand{\itm}{\item}
\newcommand{\ble}{\begin{lem}}
\newcommand{\ele}{\end{lem}}
\newcommand{\bth}{\begin{thm}}
\newcommand{\bpr}{\begin{prop}}
\newcommand{\epr}{\end{prop}}
\newcommand{\bco}{\begin{cor}}
\newcommand{\eco}{\end{cor}}
\newcommand{\bcon}{\begin{conj}}
\newcommand{\econ}{\end{conj}}
\newcommand{\bde}{\begin{defn}}
\newcommand{\ede}{\end{defn}}
\newcommand{\bex}{\begin{exa}}
\newcommand{\eex}{\end{exa}}
\newcommand{\brem}{\begin{rem}}
\newcommand{\erem}{\end{rem}}
\newcommand{\bnot}{\begin{notat}}
\newcommand{\enot}{\end{notat}}
\newcommand{\balg}{\begin{alg}}
\newcommand{\ealg}{\end{alg}}

\newcommand{\cI}{\mathcal{I}}

\long\def\forget#1\forgotten{} %

\setlength{\topmargin}{0.0in}
\setlength{\textheight}{8.5in}
\setlength{\textwidth}{5.67in}
\setlength{\evensidemargin}{.3in}
\setlength{\oddsidemargin}{.3in}

\begin{document}
\title[Short expressions of permutations and cryptanalysis]{Short expressions of permutations as products and
cryptanalysis of the Algebraic Eraser}

\author{Arkadius Kalka, Mina Teicher, and Boaz Tsaban}
\address{Department of Mathematics,
Bar Ilan University,
Ramat Gan 52900,
Israel}
\email{Arkadius.Kalka@rub.de, teicher@math.biu.ac.il, tsaban@math.biu.ac.il}
\urladdr{http://www.cs.biu.ac.il/\~{}tsaban}

\begin{abstract}
On March 2004, Anshel, Anshel, Goldfeld, and Lem\-ieux introduced
the \emph{Algebraic Eraser} scheme for key agreement over an
insecure channel, using a novel hybrid of infinite and
finite noncommutative groups. They also introduced the
\emph{Colored Burau Key Agreement Protocol (CBKAP)}, a concrete
realization of this scheme.

We present general, efficient heuristic algorithms, which extract the shared key
out of the public information provided by CBKAP.
These algorithms are, according to heuristic reasoning and according to massive experiments,
successful for all sizes of the security parameters,
assuming that the keys are chosen with standard distributions.

Our methods come from probabilistic group theory (permutation group actions and expander graphs).
In particular, we provide a simple
algorithm for finding short expressions of permutations in $S_n$,
as products of given random permutations.
Heuristically, our algorithm gives expressions of length $O(n^2\log n)$,
in time and space $O(n^3)$. Moreover, this is provable from \emph{the Minimal Cycle Conjecture},
a simply stated hypothesis concerning the uniform distribution on $S_n$.
Experiments show that the constants in these estimations
are small. This is the first practical algorithm for this problem for $n\ge 256$.

\medskip

\noindent\emph{Remark.}
\emph{Algebraic Eraser} is a trademark of SecureRF.
The variant of CBKAP actually implemented by SecureRF uses proprietary distributions,
and thus our results do not imply its vulnerability.

\medskip

\noindent Received 10 August 2009; received in revised form 18 May 2011; accepted 5 March 2012.
\end{abstract}

\maketitle

\section{Introduction and overview}

Starting with the seminal papers \cite{Anshel, Ko}, several attempts
have been made to construct and analyze public key schemes based on noncommutative groups
and combinatorial, or computational, group theory.
One motivation is that such systems may provide longer term security than existing schemes.
Another motivation, at present theoretical, is that unlike the main present day public key schemes,
these new schemes may be resistant to attacks by quantum computers.
Already in the short run, these connections between
combinatorial group theory and cryptography lead to mathematical questions not asked
before, and consequently to new mathematical and algorithmic results.

In this paper, we study a scheme falling in the above category, whose cryptanalysis
leads to an algorithm with potential applicability beyond the studied scheme.

The \emph{Algebraic Eraser} key agreement scheme was introduced by Anshel, Anshel, Goldfeld, and Lemieux in
the workshop \emph{Algebraic Methods in Cryptography} held in Dortmund, Germany, on March 2004, and
in the special session on \emph{Algebraic Cryptography}, at the Joint International Meeting of
the AMS, DMV, and \"OMG, held in Mainz, Germany, on June 2005.
It was subsequently published as \cite{Eraser}.

Apart from its mathematical novelty, the Algebraic Eraser
has a surprisingly simple concrete realization, the \emph{Colored Burau Key Agreement Protocol (CBKAP)},
which consists of an efficient combination of matrix multiplications, applications of permutations, and
evaluations of polynomials at elements of a finite field.

For four years since its introduction, no weakness in CBKAP was reported.
On January 13, 2008, Kalka and Tsaban have described the attack presented here in
Bar-Ilan University's \emph{CGC Seminar} \cite{CGCColloq}. At about the same time (on January 30, 2008),
Myasnikov and Ushakov uploaded to the ArXiv eprint server an
independent attack, and subsequently published it \cite{MU}.
Myasnikov and Ushakov use a length based algorithm to break the Third Trusted Party (TTP)
component of CBKAP, with excellent success rates for the parameters proposed in \cite{Eraser}.
They indicate that the success rates of their attack drop if the parameters are increased.
In his recent paper \cite{Gunnells}, Gunnells reproduces Myasnikov and Ushakov's attack,
and concludes that as the key lengths increase, the attack quickly loses power, and soon fails in all instances.
Furthermore, he provides experiments suggesting that their attack is not robust against several easily
implemented defenses.
He concludes that ``the success of the attack seems mainly to be due to it
being applied to short words.'' \cite{Gunnells}

The security of the main ingredient of CBKAP is not addressed in \cite{MU}.
Would fixing the TTP component make the protocol secure?
Moreover, in the recent work \cite{Eraser2} it is shown that
in many scenarios, there is no need to make both groups $A$ and $B$ in the protocol public (details below).
For this variant, the attack presented in \cite{MU} does not seem applicable \cite{Gunnells}.

We present an efficient attack, which recovers the shared key out of the public information,
even if one of the involved groups mentioned above remains hidden,
without attacking the TTP's private key.
According to heuristic reasoning as well as massive experiments,
the attack is efficient, and has 100\% success rates for all feasible sizes of the security parameters,
assuming standard distributions on the key spaces.\footnote{See the remark at the abstract,
which applies to this paper as well as to \cite{MU, Gunnells}.}

The methods, which make the attack applicable to large security parameters,
come from probabilistic group theory, and deal with permutation groups.
About half of the paper is dedicated to a new heuristic algorithm for finding
short expressions of permutations as words in a given set of randomly chosen permutations.
This algorithm solves efficiently instances which are intractable using previously known, provable or
heuristic, techniques.

\medskip

We conclude this introduction with several general comments.

\subsection{Construction versus analysis}
Few schemes, not counting minor variations, have been proposed thus far in the context of combinatorial group theory:
Mainly, those in \cite{Anshel, Ko}, and the one from \cite{Eraser}, which is studied here.
Most of the attempts thus far are on the side of analysis of the proposed schemes, rather than proposals of substantially new ones.
Indeed, each of the mentioned schemes is in fact an infinite family of possible schemes, with at least
two degrees of freedom: Choosing the platform groups, and choosing the distributions on the chosen
platform group. There are at present no known attacks which are guaranteed to succeed against all candidate groups (with standard distributions
on them), or against all distributions on certain groups (like the braid group) which can be sampled efficiently.
Thorough analyzes may give indication which choices may lead to secure schemes.

\subsection{Key generation in infinite groups or monoids}
There is a canonical distribution on infinite groups or monoids generated by finitely many
generators $g_1,\dots,g_k$. This is defined by fixing a length parameter $L$, and then
taking a product of $L$ elements $g_i$, each chosen with uniform distribution from the set $\{g_1,\dots,g_k\}$.\footnote{In the
case of a group, we first extend the list of generators, if necessary, so that for each $g$ in the list, $g\inv$ is also in the list.}
This is not a uniform distribution on the group,\footnote{Since the group or monoid is countably infinite, there is no uniform distribution on it.}
but the distribution induced from the uniform distribution on the
words of length $L$ in the free monoid. Since the distributions are not specified in \cite{Eraser}, we
(as well as Myasnikov and Ushakov \cite{MU} and Gunnells \cite{Gunnells})
assume these natural distributions when finitely generated groups or monoids are considered, and the uniform distribution
when finite sets are considered.
By Gunnells result \cite{Gunnells}, the results of the present paper form the first cryptanalysis
of CBKAP for these distributions, which works for all key sizes.

\subsection{Provable security}
Given a cryptographic scheme, it is desirable to have a simply stated (and apparently hard) algorithmic problem
such that, if the given scheme can be broken, then there is an efficient algorithm for the problem.
From a cryptographic point of view, there is no point in doing so when a scheme can be cryptanalyzed, as is the case here.
We believe, however, that cryptanalyses will improve our understanding of schemes based on
combinatorial group theory, and this may lead, eventually, to introduction of schemes which look promising
(i.e., resist known cryptanalyses).
Then, establishing a provable link between the security of the scheme and
the difficulty of a simply stated and apparently hard algorithmic problem would be an important task,
which would help understanding better the (potential) security of the scheme.

\section{The Algebraic Eraser scheme}
We describe here the general framework. The concrete realization will be described later.

\subsection{Notation, terminology, and conventions}
A \emph{monoid} is a set $M$ with a distinguished element $1\in M$,
equipped with an associative multiplication operation for which $1$ acts
as an identity. Readers not familiar with this notion may replace ``monoid''
with ``group'' everywhere, since this is the main case considered here.

Let $G$ be a group acting on a monoid $M$ on the left,
that is, to each $g\in G$ and each $a\in M$, a unique
element denoted $\act{g}{a}\in M$ is assigned, such that:
\be
\itm $\act{1}{a} = a$;
\itm $\act{gh}{a} = \act{g}{(\act{h}{a})}$; and
\itm $\act{g}{(ab)} = \act{g}{a}\cdot\act{g}{b}$
\ee
for all $a,b\in M, g,h\in G$.

$M\x G$, with the operation
$$\op{a}{g}\circ \op{b}{h} = \op{a\cdot \act{g}{b}}{gh},$$
is a monoid denoted $M \rtimes G$, and called the \emph{semi-direct product} of $M$ and $G$.

Let $N$ be a monoid, and $\vphi:M\to N$ a homomorphism.
The \emph{algebraic eraser} operation is the function
$\star : (N\x G)\x (M \rtimes G)\to (N\x G)$ defined by
\begin{equation}\label{stardef}
\op{a}{g}\star \op{b}{h} = \op{a  \vphi(\act{g}{b})}{gh}.
\end{equation}
$M \rtimes G$ \emph{acts on the right} on $N\x G$, that is, the following identity holds:
\begin{equation}\label{star-circ}
\bigl( (a,g)\star (b,h)\bigr) \star \op{c}{r} = \op{a}{g}\star\bigl((b,h)\circ (c,r)\bigr)
\end{equation}
for all $(a,g)\in N\x G, (b,h),(c,r)\in M\x G$.

Submonoids $A,B$ of $M \rtimes G$ are \emph{$\star$-commuting} if
\begin{equation}\label{starcommute}
\op{\vphi(a)}{g}\star \op{b}{h} = \op{\vphi(b)}{h}\star\op{a}{g}
\end{equation}
for all $(a,g)\in A, (b,h)\in B$.
In particular, if $A,B$ $\star$-commute, then
$$\vphi(a) \vphi(\act{g}{b}) = \vphi(b)  \vphi(\act{h}{a})$$
for all $(a,g)\in A, (b,h)\in B$.

\subsubsection{Didactic convention} Since the actions are superscripted,
we try to minimize the use of subscripts. As a rule, whenever two
parties, Alice and Bob, are involved, we try to use for Bob letters which are subsequent
to the letters used for Alice (as is suggested by their names).

\subsection{The Algebraic Eraser Key Agreement Scheme}

\subsubsection{Public information}
\be
\itm The group $G$ and the monoids $M, N$.
\itm A positive integer $m$.
\itm $\star$-commuting submonoids $A,B$ of $M \rtimes G$,
each given in terms of a generating set of size $k$.
\itm Element-wise commuting submonoids $C, D$ of $N$.
\ee

\brem
For clarity of exposition, we assume that $m,k$ are public, and identical for Alice's and Bob's parts.
However, this assumption is not required for the scheme to work, nor for its cryptanalysis described
below.
\erem

\subsubsection{The protocol}\label{prot1}
\be
\itm Alice chooses $c\in C$ and $(a_1,g_1),\dots,(a_m,g_m)\in A$,
and sends
$$(p,g) = (c,1)\star (a_1,g_1)\star \dots \star(a_m,g_m)\in N\x G$$
(the $\star$-multiplication is carried out from left to right) to Bob.
\itm Bob chooses $d\in D, (b_1,h_1),\dots,(b_m,h_m)\in B$,
and sends
$$(q,h) = (d,1)\star (b_1,h_1)\star \dots \star(b_m,h_m)\in N\x G$$
to Alice.
\itm Alice and Bob compute the shared key:
$$
(c  q,h) \star (a_1,g_1)\star \dots \star(a_m,g_m) =
(d  p,g)\star (b_1,h_1)\star \dots \star(b_m,h_m).
$$
We will soon explain why this equality holds.
\ee
For the sake of mathematical analysis, it is more convenient to
reformulate this protocol as follows. The public information
remains the same.
In the notation of Section \ref{prot1}, define
\begin{eqnarray*}
(a,g) & = & (a_1,g_1)\circ \dots \circ (a_m,g_m)\in A;\\
(b,h) & = & (b_1,h_1)\circ \dots \circ (b_m,h_m)\in B.
\end{eqnarray*}
By Equations \eqref{star-circ} and \eqref{stardef}, Alice and Bob transmit the information
\begin{eqnarray*}
(p,g) & = & (c,1)\star (a_1,g_1)\star \dots \star(a_m,g_m) = (c,1)\star (a,g) = (c  \vphi(a),g);\\
(q,h) & = & (d,1)\star (b_1,h_1)\star \dots \star(b_m,h_m) = (d,1)\star (b,h) = (d  \vphi(b),h).
\end{eqnarray*}
Using this and Equation \eqref{starcommute}, we see in the same manner that the shared key is
\begin{eqnarray}
\lefteqn{(cq,h) \star (a,g) = (c  q\vphi(\act{h}{a}),h  g) =}\\ \nonumber
& = & (c  d\vphi(b)\vphi(\act{h}{a}),hg) = (d c\vphi(a)\vphi(\act{g}{b}),gh) =\\ \nonumber
& = & (d p\vphi(\act{g}{b}),gh) = (d p,g)\star (b,h).\label{shared}
\end{eqnarray}

\subsection{When $M$ is a group}\label{agroup}
In the concrete examples for the Algebraic Eraser scheme, $M$ is a group \cite{Eraser}.
Consequently, $M\rtimes G$ is also a group, with inversion
$$(a,g)\inv = (\act{g\inv}{a\inv},g\inv)$$
for all $(a,g)\in M\rtimes G$.

\section{A general attack on the scheme}

We will attack a stronger scheme, where only one of the groups $A$ or $B$ is made public.
Without loss of generality, we may assume that $A$ is known.
$A$ is generated by a given $k$-element subset. Let
$(a_1,s_1),\dots,(a_k,s_k)\in M\rtimes G$ be the given generators of $A$.
Let $S = \{s_1,\dots,s_k\}$.
$S^{\pm 1}$ denotes the symmetrized generating set $\{s_1,\dots,s_k,\allowbreak s_1\inv,\dots,s_k\inv\}$.

\subsection{Assumptions}

\subsubsection{Distributions and complexity}
Alice and Bob make their choices according to certain distributions.
Whenever we mention a \emph{probability}, it is meant with respect to the relevant distribution.
All assertions made here are meant to hold ``with significant probability'' and the generation
of elements must be possible within the available computational power. We will quantify
our statements later.

\begin{as}\label{loworder}
It is possible to generate an element $(\alpha,1)\in A$ with $\alpha\neq 1$.
\end{as}
Assumption \ref{loworder} is equivalent to the possibility of generating $(\alpha,g)\in A$ such
that the order $o$ of $g$ in $G$ is smaller than the order of $(\alpha,g)$ in $M\rtimes G$. Indeed, in this
case $(\alpha,g)^o$ is as required.

\begin{as}\label{matrices}
$N$ is a subgroup of $\operatorname{GL}_n(\F)$ for some field $\F$ and some $n$.
\end{as}
We do not make any assumption on the field $\F$.

Alice generates an element $(a,g)\in A$, and in particular she generates $g$ in the subgroup of
$G$ generated by $S$.

\begin{as}\label{shortexpression}
Given $g\in\langle S\rangle$, $g$ can be explicitly expressed as a product of elements of $S^{\pm 1}$.
\end{as}

\subsection{The attack}

\subsubsection{First phase: Finding $d$ and $\vphi(b)$ up to a scalar}\label{firstphase}
By $\star$-commutativity of $A$ and $B$, and since $(b,h)\in B$,
we have that for each $(\alpha,1)\in A$,
\begin{equation}\label{1}
\vphi(\alpha)\vphi(b)  =  \vphi(b)\vphi(\act{h}{\alpha}).
\end{equation}
By Assumption \ref{loworder}, we can generate such equations with $\alpha$ known,
so that only $\vphi(b)$ is unknown.

Now, $q = d\vphi(b)$ is a part of the transmitted information.
Substituting $\vphi(b) = d\inv q$ in Equation \eqref{1}, we obtain
$$d\vphi(\alpha)=(q\vphi(\act{h}{\alpha})q\inv) d,$$
where only $d$ is unknown. Moreover, as $C,D$ commute element-wise,
we have that
\begin{equation}\label{2}
d\gamma  =  \gamma d
\end{equation}
for all $\gamma\in C$.

Even for just one nontrivial $\alpha$
and one $\gamma\in C$, we obtain $2n^2$ equations on the $n^2$ entries of $d$.
Thus, if standard distributions were used to generate the keys,
we expect, heuristically, that the solution space will be one-dimensional.
(As this is a homogeneous equation and the matrices are invertible,
the solution space cannot be zero-dimensional.)
If it is accidentally not, we can generate more equations in the same manner.\footnote{In the case of CBKAP (the concrete realization described below),
one equation of each type was enough in all experiments we have conducted.
Except for few exceptions in tiny parameter settings, where exhaustive
search of the key can be carried out easily.}

More formally, let $d,\tilde d$ be solutions to Equations \eqref{1} and \eqref{2},
say for $(\alpha_1,1),\dots,\allowbreak(\alpha_r,1)\in A$, and $\gamma_1,\dots,\gamma_s\in C$.
Then
\begin{eqnarray*}
\tilde dd\inv & = & \tilde d\vphi(\alpha_i)(d\vphi(\alpha_i))\inv =
(q\vphi(\act{h}{\alpha_i})q\inv)\tilde d ((q\vphi(\act{h}{\alpha_i})q\inv) d)\inv =\\
& = & (q\vphi(\act{h}{\alpha_i})q\inv)\tilde d d\inv(q\vphi(\act{h}{\alpha_i})q\inv)\inv,
\end{eqnarray*}
and thus $\tilde dd\inv$ commutes with $q\vphi(\act{h}{\alpha_i})q\inv$,
for each $i\in\{1,\dots,r\}$.

Moreover, $\tilde dd\inv$ commutes with all elements of $C$, and thus
is in the centralizer of
$$\{q\vphi(\act{h}{\alpha_1})q\inv,\dots,q\vphi(\act{h}{\alpha_r})q\inv\}\cup \{\gamma_1,\dots,\gamma_s\}.$$
Similarly, we have that $d\inv\tilde d$ is in the centralizer of
$$\{\vphi(\alpha_1),\dots,\vphi(\alpha_r)\}\cup \{\gamma_1,\dots,\gamma_s\}.$$
Our precise assumption is that for some, not large, numbers $r,s$,
we have that with high probability, (at least) one of these centralizers is one-dimensio\-nal.
Since $C$ is, by assumption, a group of matrices, this means that this centralizer is not larger
than the centralizer of the full matrix group $\operatorname{GL}_n(\F)$,
i.e.\ the scalar matrices. (Observe that $d\inv\tilde d$ is scalar if and only if $\tilde dd\inv$ is.)

If one can find a small generating set $\{\gamma_1,\dots,\gamma_s\}$ for $C$,\footnote{Indeed, in CBKAP,
described below, $C$ is cyclic.}
then the assumption tells that the centralizer of
$$\{\vphi(\act{h}{\alpha_1}),\dots,\vphi(\act{h}{\alpha_r})\}\cup q\inv Cq$$
or of
$$\{\vphi(\alpha_1),\dots,\vphi(\alpha_r)\}\cup C$$
is one-dimensional.

Thus, heuristically, we assume that we have found $x d$ for some unknown scalar $x\in\F$.
Now use our knowledge of $q = d\vphi(b)$ to compute
$$(x d)\inv q = \frac{1}{x}\ d\inv q = \frac{1}{x}\ \vphi(b).$$
In summary:
We know $x d$ and $x\inv\vphi(b)$,  for some unknown scalar $x\in\F$.

\subsubsection{Second phase: Generating elements with a prescribed $G$-coordina\-te and extracting the key}\label{secondphase}
Using Assumption \ref{shortexpression},
find $i_1,\dots,i_\ell\in\{1,\dots,k\}$ and $\epsilon_1,\dots,\epsilon_\ell\in\{1,-1\}$
such that
$$g = s_{i_1}^{\epsilon_1}\cdots s_{i_\ell}^{\epsilon_\ell}.$$
Compute
$$(\delta,g) = (a_{i_1},s_{i_1})^{\epsilon_1}\circ \cdots \circ (a_{i_d},s_{i_\ell})^{\epsilon_\ell}\in A.$$
$\delta$ may or may not be equal to $a$.

\begin{rem}
If $M$ is finitely generated as a monoid, the expression in Assumption \ref{shortexpression} should
be as a product of elements of $S$.
In the cases discussed later in this paper, $G=S_n$ and the methods of
Section \ref{heualg} can be adjusted to obtain positive expressions (Remark \ref{mono}).
\end{rem}

By $\star$-commutativity of $(\delta,g)$ and $(b,h)$,
$\vphi(b)\vphi(\act{h}{\delta}) = \vphi(\delta)\vphi(\act{g}{b})$,
and thus we can compute
$$x\inv\vphi(\act{g}{b}) = \vphi(\delta)\inv (x\inv \vphi(b))\vphi(\act{h}{\delta}).$$
We are now in a position to compute the secret part of the shared key, using Equation \eqref{shared}:
$$(xd)p(x\inv\vphi(\act{g}{b})) =dp\vphi(\act{g}{b}).$$
The attack is complete.

\section{Cryptanalysis of CBKAP}

Anshel, Anshel, Goldfeld, and Lemieux propose in \cite{Eraser}
an efficient concrete realization which they name \emph{Colored Burau Key Agreement Protocol (CBKAP)}.
We give the details, and then describe how our cryptanalysis applies in this case.

\subsection{CBKAP}\label{CBKAPdef}
CBKAP is the Eraser Key Agreement scheme in the following particular case. Fix positive integers $n$ and $r$, and a prime number $p$.
\be
\itm $G = S_n$, the symmetric group on the $n$ symbols $\{1,\dots,n\}$.
$S_n$ acts on $M=\operatorname{GL}_n(\F_p(t_1,\dots,t_n))$ by permuting the variables $\{t_1,\dots,t_n\}$.
\itm $N = \operatorname{GL}_n(\F_p)$.
\itm $M\rtimes S_n$ is the subgroup of $\operatorname{GL}_n(\F_p(t_1,\dots,t_n))\rtimes S_n$, generated by
$(x_1,s_1),\allowbreak\dots,(x_{n-1},s_{n-1})$, where $s_i$ is the transposition $(i,i+1)$, and\\
$x_1 = \mx{-t_1 & 1\\
0 & 1\\
 & & \ddots\\
 & & & 1};\quad
x_i = \mx{1\\
& \ddots\\
& & 1 & 0 & 0\\
& & t_i & -t_i & 1\\
& & 0 & 0 & 1\\
& & & & & \ddots\\
& & & & & & 1
}
$\\
for $i=2,\dots,n-1$.
Only the $i$th row of $x_i$ differs from the corresponding row of the identity matrix.
A direct calculation shows that $(x_i,s_i)$ commutes with $(x_j,s_j)$ when $|i-j|>1$,
and that
$$(x_i,s_i)(x_{i+1},s_{i+1})(x_i,s_i)=(x_{i+1},s_{i+1})(x_i,s_i)(x_{i+1},s_{i+1}).$$
Thus, the \emph{colored Burau group} $M\rtimes S_n$ is a representation of Artin's braid group $B_n$,
determined by mapping each Artin generator $\sigma_i$ to $(x_i,s_i)$, $i=1,\dots,n-1$.\footnote{Additional
details on the colored Burau group can be found in, e.g., \cite{Morton}.}

\itm $\vphi:M\to\operatorname{GL}_n(\F_p)$ is the evaluation map obtained by replacing each variable $t_i$
by a fixed element $\tau_i\in\F_p$.
\itm $C=D=\F_p(\kappa)$ is the group of nonzero matrices of the form
$$\ell_1\kappa^{j_1}+\dots+\ell_r\kappa^{j_r},$$
with $\kappa\in\operatorname{GL}_n(\F_p)$
a matrix of order $p^n-1$, $\ell_1,\dots,\ell_r\in\F_p$, and $j_1,\dots,j_r\in\mathbb{Z}$.
\ee
\brem
Let $f(x)$ be the characteristic polynomial of $\kappa$.
Then $x\mapsto\kappa$ induces an isomorphism from $\F_{p^n}=\F[x]/\langle f(x)\rangle$
onto $C=D$, that is, $C$ and $D$ are the image of $\F_{p^n}^*$ in $\operatorname{GL}_n(\F_p)$,
or in other words, the nonsplit torus in $\operatorname{GL}_n(\F_p)$.
\erem
Commuting subgroups of $M\rtimes G$ are chosen once, by a trusted party,
as follows:
\be
\itm Fix $I_1,I_2\sbst\{1,\dots,n-1\}$ such that for all $i\in I_1$ and $j\in I_2$,
$|i-j|\ge 2$. $|I_1|$ and $|I_2|$ are both $\le n/2$.
\itm Define $L=\langle \sigma_i : i\in I_1\rangle $ and $U=\langle \sigma_j : j\in I_2\rangle $, subgroups of $B_n$
generated by Artin generators.
\itm $L$ and $U$ commute element-wise. Add to both groups the central element $\Delta^2$ of $B_n$.
\itm Choose a random $z\in B_n$.
\itm Choose $w_1,\dots,w_k\in zLz\inv, v_1,\dots,v_k\in zUz\inv$, each a product of $t$
generators ($t$ is a parameter of the scheme). Transform them into Garside left normal form, and remove all even powers of $\Delta$.
Reuse the names $w_1,\dots,w_k, v_1,\dots,v_k$ for the resulting braids.
\itm Let $\rho: B_n\to M\rtimes S_n$ be the colored Burau representation function.
$A,B$ are the subgroups of $\rho(zLz\inv),\rho(zUz\inv)$ generated by $\rho(w_1),\dots,\rho(w_k)$,
and by $\rho(v_1),\dots,\rho(v_k)$, respectively.
\itm $w_1,\dots,w_k, v_1,\dots,v_k$ are made public.
\ee
To carry out our attack, it suffices to assume that the image in the colored Burau group
of one of the sets $\{w_1,\dots,w_k\}$ or $\{v_1,\dots,v_k\}$, is given.

\subsubsection{Parameter settings and efficiency}
These issues are discussed in detail in \cite{Eraser}.
For the parameters proposed there, it is shown that CBKAP can be implemented efficiently, even on small devices
as RFID tags. However, we are interested in the more general question, whether \emph{some} parameters
may make CBKAP secure. E.g., CBKAP can be implemented on standard PC-s with parameters much larger than
those proposed in \cite{Eraser}, and still be more efficient than the ordinary schemes based on RSA,
Diffie-Hellman in $\mathbb{Z}_p^*$, or elliptic curves.
We will show that even for such large parameters, CBKAP can be broken.

\subsection{The attack}
Assumption \ref{matrices}, that $N$ is a subgroup of $\operatorname{GL}_n(\F)$ for some field $\F$,
is a part of the definition of CBKAP. We consider the remaining ones.

\subsubsection{Regarding Assumption \ref{loworder}}
This assumption amounted to:
It is possible to generate, efficiently, an element $(\alpha,\sigma)\in A$ such
that the order $o$ of $\sigma$ is smaller than that of $(\alpha,\sigma)$.

In the notation of Section \ref{CBKAPdef},
$\{i,i+1 : i\in I_1\}$ decomposes to a family $\cI$ of maximal intervals
$[i,\ell] = \{i,i+1,\dots,\ell\}$, and $\sum_{[i,\ell]\in\cI}\ell-i+1\le n/2$.
Now
$$U = \langle \Delta^2\rangle \oplus\bigoplus_{[i,\ell]\in I} B_{\ell-i+1}.$$
Each considered $s$ is a permutation induced by the braid $\Delta^{2m}zwz\inv$ with $w\in L$.
Let $\pi:B_n\to S_n$ be the canonical homomorphism. Then
$$s = \pi(\Delta^{2m}zwz\inv) = \pi(\Delta^2)^m\pi(z)\pi(w)\pi(z)\inv = \pi(z)\pi(w)\pi(z)\inv,$$
is conjugate to $\pi(w)$. On each component, this is a product of many random transpositions,
and is therefore an almost uniformly-random permutation on that component.
We therefore have the following:
\be
\itm $U/\langle \Delta^2\rangle $ decomposes into a direct product of braid groups, whose indices
do not sum up to more than $n/2$.
\itm $\pi(U)$ decomposes into a direct product of symmetric groups, whose indices
do not sum up to more than $n/2$.
\itm For generic\footnote{By ``generic'' we mean a typical element with respect to the relevant distribution.}
$(a,s)\in A$, $\pi(z)\inv s\pi(z)$ is generic on each part of the mentioned decomposition.
\ee
The probability that the order of a
random permutation in $S_n$ is $\le n$ is $O(1/\sqrt[4]{n})$ \cite{BabaiHayes05}.
Thus, we can find an element $(a,s)\in A$ with $s$ of order $\le n$ by
generating (roughly $\sqrt[4]{n}$) elements $(a,s)\in A$, until the order of $s$ is as required.

On the other hand, the element $(a,s)$ is a representation of an element of the braid group,
which is known to be torsion-free \cite{Mu82}. While it may be that the representation used here is
not faithful,\footnote{It is open whether the colored Burau representation is faithful,
even without reduction of the integers modulo $p$.}
it is very unlikely that $(a,s)$ could have finite order.

\medskip

The remainder of this paper is dedicated to Assumption \ref{shortexpression}.

\section{Membership search in generic permutation groups}\label{heualg}


For the second phase of our attack, we need to find a short
expression of a given permutation from $G$ in terms of given ``random'' permutations.
In CBKAP, the group $G$ typically has the form $\pi\inv H\allowbreak\pi\le S_n$, where $\pi\in S_n$,
$H$ is $S_{n/2}$ or $A_{n/2}$, and $H$ is embedded in $S_n$ in a natural way (supported by the $n/2$ higher indices).
The conjugation is just relabeling of the indices $1,\dots,n$.
Thus, we may reduce the problem to the case $G=S_n$.
Modifications of the algorithm can be made,
that will make it applicable to any (conjugation of) direct product of groups of the form
$A_n$ or $S_n$.

For \emph{concrete} generators, the problem of finding short expressions for given permutations
is well known, and in similar form occurs in the analysis of the Rubik's cube and other
puzzles. The best known heuristics for solving it in these cases
are based on Minkwitz's algorithms \cite{Min98}, and are incapable of
managing Problem \ref{randsp} for random $s_1,\ldots ,s_k\in S_n$ where
$n$ is large (say, $n\ge 128$), as our experiments below show.

\begin{prob}\label{randsp}
Given random $s_1,\ldots ,s_k\in S_n$ and $s\in \langle s_1,\ldots ,s_k\rangle $,
express $s$ as a short product of elements from $\{s_1,\ldots ,s_k\}^{\pm 1}$.
\end{prob}

In Problem \ref{randsp}, \emph{short} could mean of polynomial length, or of length manageable
by the given computational power as explained above. In any case, the length is the number of letters in the expression,
and not the length of a compressed version of the expression. This limitation comes from the intended
application, where elements of the infinite monoid require storage space which grows with multiplication,
and circumventing this problem by performing one $\star$ multiplication for each letter in the word makes it impossible
to square in a single operation.
If the word is too long (e.g., of the form $a^{(2^{64})}$ for a single generator $a$), the second phase of the attack
becomes infeasible.

Much work was carried out on this problem, by Babai, Beals, Hetyei, Hayes, Kantor, Lubotzky, Seress,
and others (see \cite{BHKL90, BabiBealsSeress04, BabaiHayes05} and references therein).
The works of Babai, Beals, Hayes, and Seress \cite{BabaiHayes05, BabiBealsSeress04} imply that
there is a Las Vegas algorithm for Problem \ref{randsp}, producing expressions of length $n^7(\log n)^{O(1)}$.
This remarkable result solves our problem for moderately small values of $n$.
However, for $n\ge 128$, the resulting expression is too long to be practical.
Our algorithm may be viewed as a (substantial) heuristic simplification of the algorithms induced by
their works.

A classical result of Dixon \cite{Dixon69} tells that two random elements of $S_n$,
almost always generates $A_n$ (if all generators are even permutations) or $S_n$ (otherwise).
Babai proved that getting $A_n$ or $S_n$ happens in probability $1-1/n+O(1/n^2)$ \cite{Babai89}.
Moreover, experiments show that this probability is very close to $1-1/n$ even for small $n$,
i.e., the $O(1/n^2)$ is negligible also for small $n$.
This generalizes to arbitrary $k$, as follows.

\begin{thm}[Dixon]\label{BD}
The asymptotic probability that $k$ random elements of $S_n$ generate $A_n$ or $S_n$ is
roughly $1-n^{-k+1}$.
\end{thm}
Since we do not know of a reference for a proof, we include a proof, suggested to us by Dixon.
\bpf[Proof of Theorem \ref{BD}]
In Section 4 of \cite{Dixon05} it is shown that the probability that
$k$ random permutations generate a transitive group is roughly $1-n^{-k+1}$.

The proof of Lemma 2 in \cite{Dixon69}
can be modified to show that the proportion of pairs $(x,y)$ with $x,y\in S_n$ which are contained
in an imprimitive group (not necessarily generating the imprimitive group) is at most $2^{-n/4}$.
Hence, the proportion of $k$-tuples contained in an imprimitive group is bounded by
$2^{-n/4}$ for all $k>1$.

Theorem 2.8 of Babai's paper \cite{Babai89}
states that the probability that $k$ random permutations generate a primitive group different from $A_n$ or $S_n$ is
smaller than $(n^{\sqrt{n}}/n!)^{k-1}$ (generalizing his theorem when $k = 2$), which is exponentially small.
\epf

Given that we obtain $A_n$ or $S_n$, the probability of the former case is
$2^{-k}$. However, since $k=2$ is of classical interest, we do not
neglect this case.
Thus, for randomly chosen permutations Problem \ref{randsp} reduces (with a small loss in probability)
to the following one.

\begin{prob}\label{randsp2}
\mbox{}
\be
\item Given random $s,s_1,\ldots ,s_k\in A_n$,
express $s$ as a short product of elements from $\{s_1,\ldots ,s_k\}^{\pm 1}$.
\item Given random $s,s_1,\ldots ,s_k\in S_n$ with some $s_i\notin A_n$,
express $s$ as a short product of elements from $\{s_1,\ldots ,s_k\}^{\pm 1}$.
\ee
\end{prob}

A solution of Problem \ref{randsp2}(1) implies a solution of Problem \ref{randsp2}(2):
Let $I=\{i : s_i\notin A_n\}$. $I\neq\emptyset$. Fix $i_0\in I$, and for each $i\in I$,
replace the generator $s_i$ with the generator $s_{i_0}s_i\in A_n$.
Then $\{s_{i_0}s_i : i\in I\}\cup\{s_i : i\notin I\}$ is a set of $k$
nearly random elements of $A_n$ (cf.\ \cite{BabaiHayes05}).
If $s\in A_n$, use (1) to obtain a short expression of $s$ in terms of the new generators.
This gives an expression in the original generators of at most double length.
Otherwise, $s_{i_0}s\in A_n$ and its expression gives an expression of $s$ in terms
of the original generators.

Thus, in principle one may restrict attention to Problem \ref{randsp2}(1).
However, we do not take this approach,
since we want to make use of transpositions when we can.

\subsection{The algorithm} \label{Alg}

\subsubsection{Conventions}
\mbox{}
\be
\itm During the algorithm's execution, the expressions of some of the computed permutations in terms of
the original generators should be stored.
We do not write this explicitly.
\itm The statement \emph{for each $\tau\in\langle S\rangle $} means that the
elements of $\langle S\rangle $ are considered one at a time, by first considering
the elements of $S^{\pm 1}$, then all (free-reduced) products of two elements from $S^{\pm 1}$,
etc.\ (a breadth-first search), until an \emph{end} statement is encountered.
\itm For $s\in (S^{\pm 1})^*$, $\len(s)$ denotes the length of $s$ as a free-reduced word.
$s$ is identified in the usual way with the permutation which is the product of the letters in $s$.
\ee
We are now ready to describe the steps of our algorithm.
We do not consider the question of optimal values for the
parameters and other optimizations. This is left for future investigation.

\newcommand{\mypar}[2]{\par\smallskip\noindent{\textbf{#1:}} \emph{#2}}

\mypar{Input}{} $G=S_n$ or $A_n$; generators $s_1,\ldots,s_k$ of $G$; $s\in G$.

\mypar{Initialization}{}
$$c=\begin{cases}
2 & G=S_n\\
3 & G=A_n
\end{cases}$$
$C$ is the set of $c$-cycles in a canonical expression of $s$ as a product of $c$-cycles.

\mypar{Step 1}{Find a short $c$-cycle in $\langle s_1,\dots,s_k\rangle $.}
\begin{tabbing}
For\={} each $\tau\in\langle s_1,\dots,s_k\rangle $:\\
\> If\={} there is $m\in\{1,\dots,n\}$ such that $\tau^m$ is a $c$-cycle:\\
\> \> $\mu \leftarrow \tau^m$;\\
\> \> End Step 1.
\end{tabbing}
The result $\mu$ of Step 1 is forwarded to the next step.

\mypar{Step 2}{Find short expressions for additional $c$-cycles.}

\begin{tabbing}
$A_0\leftarrow \{\mu\}$;\\
For\={} $l=1,2,\dots$:\\
\> $A_l\leftarrow\emptyset$;\\
\> For\={} each $i\in\{1,\dots,k\}$, each $\epsilon\in\{-1,1\}$, and each $a\in A_{l-1}$:\\
\>\> If $s_i^{-\epsilon} a s_i^{\epsilon}\notin A_0\cup\dots\cup A_l$, add $s_i^{-\epsilon} a s_i^{\epsilon}$ to $A_l$;\\
\> When\={} $C\sbst A_0\cup\dots\cup A_l$:\\
\> \> End Step 2.
\end{tabbing}

\mypar{Final step}{Find a short expression for $s$.}
\par\smallskip\noindent
Use the expressions of the $c$-cycles in $C$ to get an expression of $s$ in terms of the original generators.

\begin{rem}[Positive expressions]\label{mono}
If one seeks for a \emph{positive} expression for $s$ in terms of $\{s_1,\dots,\allowbreak s_k\}$, we can repeatedly activate Step 1,
consider only words $\tau\in S^*$, to generate enough $c$-cycles to present $s$.
This algorithm is more time consuming this way.
\end{rem}

\section{Analysis of the generic membership search algorithm}\label{ideal}

\subsection{Asymptotic analysis}\label{Aa}
We provide an asymptotic (in $n$) analysis of the time complexity and the final expression length,
for the generic membership search algorithm (Section \ref{Alg}), modulo a probabilistic conjecture,
which we later support by heuristic reasoning as well as extensive experiments.

\subsubsection{Step 1}
\begin{conj}[Minimal Cycle Conjecture]\label{MCC}
Let $S$ be a set of $k$ elements of $S_n$, each chosen independently, according to the uniform distribution on $S_n$.
Let $c=3$ if all elements of $S$ are even, and $2$ otherwise.

Consider the following list: The elements of $S^{\pm 1}$, followed by all products of two of elements of $S^{\pm 1}$,
followed by all products of three elements of $S^{\pm 1}$, etc.

Then, almost always,\footnote{That is, with probability approaching $1$ as $n\to\infty$.}
there is among the first $n^2$ elements of the list
an element $\tau$ such that $\tau^m$ is a $c$-cycle, for some $m\le n$.
\end{conj}

In short, the Minimal Cycle Conjecture asserts that in Step 1, almost always, at most $n^2$ permutations are considered.

\begin{lem}\label{bt}
Given an element $\tau\in S_n$, deciding whether there is $m\le n$ such that $\tau^m$ is a $c$-cycle
can be done in time $O(n)$.
Finding the minimal such $m$, if it exists, can be done in time $O(n\log n)$.
\end{lem}
\begin{proof}
Let $\ell_1,\ell_2,\dots,\ell_k$ be the cycle lengths in the cycle decomposition of $\tau$, which can be computed
in linear time.
There is $m$ as required if, and only if,
there is a unique $i\le k$ such that $\ell_i=c$, and for each $j\le k$ different from $i$, $\ell_j$ is relatively
prime to $c$, which can be verified in time $O(\log\ell_j)$.\footnote{Considering standard CPU operations as requiring constant time.}
As $\sum_{j\neq i}\log\ell_j\le \sum_j\ell_j\le n$, the whole procedure is $O(n)$.

The minimal power $m$, if it exists, is $\lcm(\ell_j : j\neq i)$.
This can be computed by partitioning the list into pairs, computing the $\lcm$ of each pair to obtain
a half length list, and doing the same for this list. After $O(\log k)$ steps, we obtain the $\lcm$.
Each step requires roughly $\sum\log\ell_i$ operations, and overall we have $(\log k)\sum\log\ell_i$
which is $O(n\log n)$.
\end{proof}

\begin{cor}\label{LenMu}
Assume that Minimal Cycle Conjecture.
Then, almost always, Step 1 produces a $c$-cycle $\mu$, expressed as a product of $O(n\log n)$ elements of $S^{\pm 1}$,
in time and space $O(n^3)$ (or, alternatively, in time $O(n^3\log n)$ and space $O(n)$).
\end{cor}
\begin{proof}
To produce the list of $n^2$ permutations, we have to compute $n^2$ products of permutations (the product of an already computed permutation and
an element of $S^{\pm1}$), each requiring $n$ operations.
After each such multiplication, we check whether the result $\tau$ has the property that $\tau^m$ is a $c$-cycle for some $m\le n$.
This is done by Lemma \ref{bt}.

In summary, we have $n^2$ steps, each consisting one multiplication of permutations, and one linear time decision.
Thus, the overall time complexity is $O(n^3)$.

If $\tau$ is among the first $n^2$ elements of the sequence, then $\tau$ is a product of at most $\log_{2k}(n^2)=O(\log n)$ elements
of $S^{\pm 1}$, and therefore $\tau^m$ is expressed as a product of at most $O(m\log n)$, which is $O(n\log n)$.

Alternatively, one can compute, for each new word in the generators, the whole product.
This increases the time complexity to $O(n^3\log n)$, but reduces the space complexity to
$O(n)$.
\end{proof}

\subsubsection{Step 2}
Let $S$ be a set of $k$ elements of $S_n$, each chosen independently, according to the uniform distribution on $S_n$.
Consider the graph $G_{n,k,c}$ with vertices all $c$-cycles, such that there is an edge
between $u,v$ if and only if there is $r\in S^{\pm 1}$
with $r\inv ur=v$. This graph has $n!/(n-c)!c$ vertices and is $2k$-regular.
In the worst case we have to compute in Step 2 all vertices of this graph before this procedure terminates.
For every $a\in A_l$, $l\ge 1$, keeping track of its predecessor in $A_{l-1}$,
Step 2 computes a spanning tree of this graph, rooted at $\mu$.
Let $\ell$ be the value of $l$ at the termination of Step 2.
$\ell $ is the height of our tree, and the diameter $d$ of this graph satisfies $\ell\le d \le 2\ell$.

For each $a\in A_l$, $1\le l\le \ell -1$, $2k-1$ conjugations are performed. Indeed for each $a\in A_l$, by considering
which conjugator led to it, the inverse conjugator will not lead to anything new and is thus not
performed. Only for the root we perform $2k$ conjugations, but no one for all $a\in A_{\ell}$.
Thus, the overall number of conjugations in this step is bounded above by
$$ 2k|A_0|+(2k-1)\sum_{l=1}^{\ell -1}|A_l|=1+(2k-1)\sum _{l=0}^{\ell}|A_l|-(2k-1)|A_{\ell}|\le\frac{(2k-1)n!}{(n-c)!c}-2k+2. $$

\brem\label{faster}
In fact, as $S$ generates $G$, it also generates it as a monoid, and
thus it suffices to consider conjugations by positive generators only,
so that the overall number of conjugations is less than $(k-1)n!/(n-c)!c -k+2$.
Moreover, with high probability one can restrict attention to just two generators generating
$G$, so the number of conjugations becomes less than $n!/(n-c)!c<n^c/c$.
Here, we have to consider a digraph rather than a graph. The vertices are again all $c$-cycles and there is a directed edge from $u$ to $v$ iff
$r\inv ur=v$ for some $r\in S$. However, the diameter of this digraph is greater or equal than that of $G_{n,k,c}$.
\erem

\begin{cor}\label{Os2}
The running time of Step 2 is $O(n^2)$ if $G=S_n$ and $O(n^3)$ if $G=A_n$.
\end{cor}
\bpf
Let $c=2$ if $G=S_n$ and $3$ if $G=A_n$.
Each conjugation of a $c$-cycle requires $c\le 3$ operations:
\begin{eqnarray*}
p(i\ j)p\inv & = & (p(i)\ p(j))\\
p(i\ j\ k)p\inv & = & (p(i)\ p(j)\ p(k)).
\end{eqnarray*}
Thus, the running time is a small constant times the number of conjugations,
which is bounded by $(2k-1)n^c/c$ (or less if we work according to Remark \ref{faster}).
\epf

For each permutation $\sigma$ encountered during our algorithm,
let $\len(\sigma)$ be the length of its expression as a product of the given permutations $s_1,\dots,s_k$ and their inverses.
Recall that $\mu$ is the output of Step 1. Then for each $c$-cycle $\sigma\in A_0\cup\dots\cup A_\ell$,
$$\len(\sigma)\le \len(\mu)+2\ell.$$

\bco\label{hi}
Using the above notation, the length of the obtained expression for $s$ is
smaller than $n/(c-1)\cdot(\len(\mu)+2\ell)$.
\eco
\bpf
$s$ is a product of at most $n/(c-1)$ $c$-cycles.
\epf

The following theorem consists of Theorems 2.2 and 3.3 of \cite{Friedman}.

\begin{thm}\label{ThFr}
\mbox{}
\be
\itm Fix $k\ge 2$, $c\ge 1$ and a real $\epsilon>0$.

Let $S$ be a set of $k$ elements of $S_n$, each chosen independently, according to the uniform distribution on $S_n$.

Let $D_{n,k,c}$ be the digraph with whose vertices are the $c$-tuples of distinct elements of $\{1,\ldots ,n\}$, and where there is an arrow from
$(a_1,\ldots ,a_c)$ to $(b_1,\ldots ,b_c)$ if and only if $(b_1,\ldots ,b_c)=(s(a_1),\ldots ,s(a_c))$
for some $s\in S$.

Then, almost always, $D_{n,k,c}$ is an $\alpha $-expander, for
$$\alpha =((1-\epsilon)/2)(1-(\sqrt{2k-1}/k)^{1/(1+c)}).$$

\itm The diameter of an $\alpha$-expander with $v$ vertices is smaller than $2(1+\log _{1+\alpha }v)$.
\ee
\end{thm}

\bco \label{logdiam}
For $k\ge 2$, $c\ge 1$, the diameter of the graph $G_{n,k,c}$ is almost always bounded by $2(1+c\log _{1+\alpha}(n))$
with $\alpha =(1-(\sqrt{2k-1}/k)^{1/(1+c)})/2$.
\eco
\bpf
Consider the equivalence relation $\sim$ on the set of $c$-tuples of distinct elements of $\{1,\ldots ,n\}$,
which identifies tuples if each is a cyclic rotation of the other.
The quotient digraph $D_{n,k,c}/\sim $ is exactly the digraph mentioned in Remark \ref{faster}.
Thus, $\ell$ is smaller than its diameter, which is smaller than the diameter of $D_{n,k,c}$.
By Theorem \ref{ThFr}, the latter is smaller than
$$2(1+\log _{1+\alpha}(n^c))=2(1+c\log _{1+\alpha}n).\qedhere$$
\epf

\bco
Assume the Minimal Cycle Conjecture.
The length of the obtained expression for $s$ is $O(n^2\log n)$.
\eco
\bpf
By Corollary \ref{hi}, the length of the obtained expression for $s$ is $O(n(\len(\mu)+2\ell))$.
By Corollary \ref{LenMu}, $\len(\mu)$ is $O(n\log n)$.
By Corollary \ref{logdiam}, the diameter of our graph $G_{n,k,c}$ is $O(\log n)$, and in particular so is $\ell$.
\epf

\subsection{Heuristic evidence and estimation} \label{Heu}

\subsubsection{Step 1}
The following terminology and lemma will make the proof of the subsequent theorem
shorter. The \emph{cycle structure} of a permutation $s\in S_n$ is
the sequence $(n_1,n_2,\dots)$ of lengths of cycles of $s$
which are not fixed points.
Let $\sigma^n_{(n_1,\dots,n_k)}$ denote the number of elements of $S_n$
with cycle structure $(n_1,\dots,n_k)$.

\begin{lem}\label{ss}
For distinct $n_1,\dots,n_k$:
$\sigma^n_{(n_1,\dots,n_k)} = \frac{n!}{(n-(n_1+\dots+n_k))!\cdot n_1\cdots n_k}$.
\end{lem}
\bpf
First choose the $n_1+\dots+n_k$ elements which will occupy the cycles and consider
all their permutations, and
then divide out cyclic rotation equivalence, to get
$$\binom{n}{n_1+\dots+n_k}\cdot(n_1+\dots+n_k)!\cdot\frac{1}{n_1\cdots n_k}.$$
This is equal to $\sigma^n_{(n_1,\dots,n_k)}$.
\epf

\begin{prop}\label{cprob}
Let $c$ be $2$ if $G=S_n$, and $3$ if $G=A_n$. For random $\tau\in G$, the probability that
there is $d\in\{1,\dots,n\}$ such that $\tau^d$ is a $c$-cycle is greater than $1/cn$.
\end{prop}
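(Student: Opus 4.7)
My plan begins with a structural characterization. For $\tau \in G$ of cycle type $(n_1, \dots, n_k)$ on its non-fixed support, the $d$-th power $\tau^d$ restricted to a cycle of length $\ell$ decomposes into $\gcd(\ell, d)$ cycles of length $\ell/\gcd(\ell, d)$. Hence $\tau^d$ is a single $c$-cycle iff exactly one $n_i$ equals $c$ (with $\gcd(c, d) = 1$) and every other $n_j$ divides $d$.

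I would then lower-bound the probability using permutations of cycle type $(c, m)$ --- one $c$-cycle, one $m$-cycle, and $n - c - m$ fixed points --- with $2 \le m \le n-c$ and $\gcd(c, m) = 1$. For such $\tau$, the choice $d := m$ lies in $\{1, \dots, n\}$, is coprime to $c$, and kills the $m$-cycle, so $\tau^m$ is a single $c$-cycle. By Lemma \ref{ss}, the number of such $\tau$ in $S_n$ is $\frac{n!}{c\,m\,(n-c-m)!}$, giving probability $\frac{1}{c\,m\,(n-c-m)!}$ in $S_n$ and twice that in $A_n$ (when the cycle type lies in $A_n$). The strategy is to pick $m$ as large as possible so that $(n-c-m)!$ is $0!$ or $1!$.

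For $c = 2$ with $G = S_n$, taking $m = n-2$ if $n$ is odd and $m = n-3$ if $n$ is even gives probability at least $\frac{1}{2(n-2)} > \frac{1}{2n}$. For $c = 3$ with $G = A_n$, the sign of a $(3, m)$-permutation is $(-1)^{m-1}$, so one must restrict to odd $m$. A short case analysis modulo $6$ then supplies an odd $m$ coprime to $3$ in $\{n-3, n-4, n-5\}$ for five of the six residue classes of $n \pmod 6$, each yielding a probability exceeding $\frac{1}{3n}$.

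The main obstacle is the residue $n \equiv 1 \pmod 6$, for which the largest admissible $m \le n-3$ with the required parity and coprimality is only $n-6$, producing the bound $\frac{1}{9(n-6)}$ that falls below $\frac{1}{3n}$ once $n \ge 9$. I would resolve this by switching to the cycle type $(3, 2, n-5)$: here $n-5 \equiv 2 \pmod 6$ is even and coprime to $3$, so $d := n-5$ is coprime to $3$ and a multiple of $\operatorname{lcm}(2, n-5) = n-5$, killing both the $2$-cycle and the $(n-5)$-cycle while leaving a $3$-cycle; the sign computation confirms $\tau \in A_n$. Lemma \ref{ss} (applicable since $3, 2, n-5$ are distinct for $n \ge 9$) gives probability $\frac{1}{3(n-5)} > \frac{1}{3n}$, finishing the argument up to a direct verification for a handful of small $n$ where the chosen $m$ is out of range.
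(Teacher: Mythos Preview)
Your proposal is correct and follows essentially the same approach as the paper's proof. Both arguments single out permutations whose cycle structure is a $c$-cycle together with one (or, in the stubborn residue class, two) long cycles, invoke Lemma~\ref{ss} to compute the probabilities, and perform a case analysis modulo $2$ for $S_n$ and modulo $6$ for $A_n$; even your handling of $n\equiv 1\pmod 6$ via the cycle type $(3,2,n-5)$ coincides with the paper's choice of $(n-5,2,3)$. The only notable difference is cosmetic: the paper accumulates several cycle types per residue class to exhibit sharper constants (e.g.\ $7/12n$, $4/9n$), whereas you keep just enough to clear the $1/(cn)$ threshold.
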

\bpf
In fact, we give better bounds for most values of $n$.
We consider the probabilities to have cycle structures $(n-d,c)$ or $(n-d,e,c)$ for
appropriate $d$, such that if $\tau$ has such a cycle structure, then $\tau^{n-d}$ is a $c$-cycle.
The restrictions on the cycle structures are as follows.
\be
\itm $c$ does not divide $n-d$; and
\itm $e$ divides $n-d$ (in the case $(n-d,e,c)$).
\ee
In the case $G=A_n$, we also must have that the cycle structure is possible in $A_n$:
\be
\itm[(3)] $n-d$ is odd (in the case $(n-d,3)$);
\itm[(4)] $n-d+e$ is even (in the case $(n-d,e,3)$).
\ee
Assuming these restrictions, we compute the probabilities of these cycle structures
using Lemma \ref{ss}.
In $S_n$, the probability for $(n-d,2)$ is
\begin{eqnarray*}
\frac1{|S_n|}\cdot\sigma^n_{(n-d,2)} & = & \frac{1}{(d-2)!\cdot(n-d)\cdot 2}>\frac{1}{(d-2)!\cdot 2n}.
\end{eqnarray*}
In $A_n$, the probabilities for $(n-d,3)$ and $(n-d,e,3)$ are
\begin{eqnarray*}
\frac1{|A_n|}\cdot\sigma^n_{(n-d,3)} & = & \frac{2}{(d-3)!\cdot(n-d)\cdot 3}>\frac{2}{(d-3)!\cdot 3n},\\
\frac1{|A_n|}\cdot\sigma^n_{(n-d,e,3)} & = & \frac{2}{(d-e-3)!\cdot(n-d)\cdot e\cdot 3}>\frac{2}{(d-e-3)!\cdot 3en},
\end{eqnarray*}
respectively.
We now consider some possible cycle structure with at most one cycle of each length, and describe the restrictions
they pose on $n$ and their probabilities.

For $G=S_n$, we have the following.

\medskip
\begin{center}
\begin{tabular}{|c|l|c|c|c|}
\hline
$n \bmod 2$ & Cycle structure & Prob. & Accumulated probability\\
\hline
\hline
 $0$ & $(n-3,2)$ & $1/2n$ & \\
\cline{2-3}
     & $(n-5,2)$ & $1/12n$ & $7/12n$\\
\cline{1-4}
 $1$ & $(n-2,2)$ & $1/2n$ & \\
\cline{2-3}
     & $(n-4,2)$ & $1/4n$ & $3/4n$\\
\hline
\end{tabular}
\end{center}

\par\medskip
For $G=A_7$, we can compute directly that the cycle structure $(2,2,3)$ has probability $1/12$, which is
greater than $1/3\cdot 7$, as required. For all other $n$, we have the following.

\medskip
\begin{center}
\begin{tabular}{|c|c|l|c|c|c|}
\hline
 $n \bmod 6$ & Cycle structure & Prob. & Accumulated probability\\
\hline
\hline
 0 & $(n-5,3)$ & $1/3n$ & $1/3n$ \\
\cline{1-4}
 1 & $(n-5,2,3)$ & $1/3n$ & \\
\cline{2-3}
   & $(n-6,3)$   & $1/9n$ & $4/9n$\\
\cline{1-4}
 2 & $(n-3,3)$ & $2/3n$ & $2/3n$\\
\cline{1-4}
3 & $(n-4,3)$ & $2/3n$ & \\
\cline{2-3}
 & $(n-5,2,3)$ & $1/3n$ & $1/n$\\
\cline{1-4}
4 & $(n-3,3)$ & $2/3n$ & \\
\cline{2-3}
  & $(n-5,3)$ & $1/3n$ & \\
\cline{2-3}
  & $(n-6,2,3)$ & $1/3n$ & $4/3n$\\
\cline{1-4}
5  & $(n-4,3)$ & $2/3n$ & \\
\cline{2-3}
  & $(n-6,3)$   & $1/9n$ & \\
\cline{2-3}
  & $(n-7,2,3)$ & $1/6n$ & $17/18n$\\
\hline
\end{tabular}
\end{center}

\medskip\noindent
This completes the proof.
\epf

\begin{cor}\label{exp}
Let $c$ be $2$ if $G=S_n$, and $3$ if $G=A_n$.
Execute Step 1 with random elements $\tau\in G$ instead of the enumerated ones.
The probability that it does not end before considering $\lambda n$ permutations is smaller than $e^{-\lambda/c}$.
\end{cor}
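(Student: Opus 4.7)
The plan is straightforward: combine Proposition \ref{cprob} with independence and the standard exponential bound $1-x\le e^{-x}$.

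First I would observe what Step 1 does on a single $\tau$: it succeeds (on that $\tau$) exactly when there is some $d\in\{1,\dots,n\}$ with $\tau^d$ a $c$-cycle. Proposition \ref{cprob} says precisely that this event has probability strictly greater than $1/(cn)$ when $\tau$ is a uniformly random element of $G$. So the per-trial failure probability is strictly less than $1-1/(cn)$.

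Next I would use the assumption in the statement that we replace the enumerated permutations with genuinely random elements of $G$, so that the outcomes of the successive trials are independent. Then the probability that none of the first $\lambda n$ trials produces a $c$-cycle via some power is bounded by
\[
\left(1-\frac{1}{cn}\right)^{\lambda n}.
\]
Applying the elementary inequality $(1-x)^k\le e^{-xk}$ with $x=1/(cn)$ and $k=\lambda n$ gives the bound $e^{-\lambda/c}$.

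There is essentially no hard step here; Proposition \ref{cprob} has already done the real work in bounding the one-trial success probability, and the rest is the usual Bernoulli-to-exponential estimate under the idealized independence assumption of Section \ref{ideal}. The only thing worth flagging is that the corollary is stated for truly random $\tau\in G$, not for the breadth-first enumerated sequence — this is why the conclusion is called an \emph{idealized} estimate and why, in the actual algorithm, the bound should be multiplied by the density factor $\alpha^{-1}$ from Assumption \ref{nearid}.
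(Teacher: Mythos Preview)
Your proof is correct and follows essentially the same route as the paper: invoke Proposition \ref{cprob} for the single-trial success probability, use independence of the random $\tau$'s to bound the failure probability by $(1-1/(cn))^{\lambda n}$, and finish with the standard inequality $1-x<e^{-x}$. The paper's write-up is the one-line version of exactly this argument.
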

\bpf
By Corollary \ref{cprob},
the probability of not obtaining a $c$-cycle for $\lambda n$ randomly chosen
$\tau \in G$ is at most
$$\left(1-\frac{1}{cn}\right)^{\lambda n}=\left(\left(1-\frac{1}{cn}\right)^{cn}\right)^{\frac{\lambda}{c}}<
(e^{-1})^\frac{\lambda}{c} = e^{-\frac{\lambda}{c}}.\qedhere$$
\epf

\begin{exa}
Let $c$ be $2$ if $G=S_n$, and $3$ if $G=A_n$,
and $\lambda=c\lambda_0\log n$ for some constant $\lambda_0$.
Then the probability in Proposition \ref{exp} is smaller than
$$e^{-\frac{c\lambda_0\log n}{c}} = n^{-\lambda_0}.$$
\end{exa}

This shows that if we use, in Step 1 of our algorithm, random elements instead of the enumerated ones,
then, almost always, this step halts after the consideration of at most $3n\log n$ permutations.
This is much smaller than the $n^2$ in the Minimal Cycle Conjecture \ref{MCC}. We conjecture that this
increase from $n\log n$ to $n^2$ remedies for the fact that in the conjecture, the considered elements
are \emph{not} independent.
Below, we provide experimental evidence for that.

\subsubsection{The expression's length}\label{explen}

Using Corollary \ref{logdiam},
we can derive a rough upper bound on the \emph{average} length of the expression
provided by the generic membership search algorithm,
assuming the Minimal Cycle Conjecture \ref{MCC}.
According to this conjecture, Step 1 uses on average less than $n^2$ permutations until finding a good one $\tau$.
If $\tau$ is the $n^2$-th permutation in our breadth-first enumeration of $\langle s_1,\dots,s_k\rangle$,
then its length $d$ as a word in the generators satisfies
$$(2k-1)^{d-1}\le 2k(2k-1)^{d-2}\le n^2.$$
Thus
$$\len(\tau) \lesssim \frac{2}{\log(2k-1)}\cdot\log n.$$
Then, $\mu$ is at most an $n$-th power of $\tau$. Thus on average,
$$\len(\mu) \lesssim \frac{2}{\log(2k-1)}\cdot n\log n.$$
By Corollary \ref{logdiam}, $\ell$ is on average much smaller than $\len(\mu)$,
and thus by Corollary \ref{hi}, the average length of the resulting expression is roughly bounded by
$$\frac{2}{(c-1)\log(2k-1)}\cdot n^2\log n=
\begin{cases}
\frac{2}{\log(2k-1)}\cdot n^2\log n & G=S_n\\
\frac{1}{\log(2k-1)}\cdot n^2\log n & G=A_n.
\end{cases}
$$

\section{Experimental results}

\subsection{The full attack}
We have implemented our full attack on CBKAP,
and tested it against a large number of parameter settings, including the suggested ones, smaller ones,
much larger ones, and mixed settings (some parameters are small and some are large).
The full attack succeeded to extract the shared key out of the public information correctly, in \emph{all} tested cases,
including those in which the generated subgroup of $S_{n/2}$ was neither $S_{n/2}$ nor $A_{n/2}$.

\subsection{The generic membership search algorithm}
We then moved to a systematic examination of the generic membership search algorithm.
This algorithm worked efficiently and successfully in all experiments, and its time, space
and length of output were all surprisingly close to the estimations computed in the previous
sections.
The most difficult case for this algorithm is where there are only $k=2$ random generators $s_1,s_2$.
Thus, we have made a large battery of experiments for $k=2$.

\forget
\subsubsection{The average number of permutations in Step 1 versus $n^2$}
For various values of $n$, and for $G=S_n$ or $A_n$, we have calculated the average number of permutations considered in Step 1.
Let $f(n)$ be such that $f(n)\cdot n$ is equal to this number. In light of the Minimal Cycle Conjecture \ref{MCC}, one may expect that $f(n)\le n$.
Indeed, we obtained the following values for $f(n)$.

\begin{table}[!htp]
\caption{$f(n)\le n$}\label{alpha}
\begin{tabular}{|c||r|r|r|r|r|r|}
\hline
$n$ & 8 & 16 & 32 & 64 & 128 & 256\\
\hline\hline
$S_n$ & 9.28 & 11.88 & 14.8 & 17.08 & 21.44 & 27.12\\
\hline
$A_n$ & 7.53 & 12.72 & 19.08 & 24.12 & 28.56 & 34.68\\
\hline
\end{tabular}
\end{table}

This may indicate that the actual value in the Minimal Cycle Conjecture is much smaller than $n^2$.
\forgotten

We make the following conventions.
The constant $c$ is $2$ if $G=S_n$, and $3$ if $G=A_n$.
For each $n=8,16,32,64,128,256$, we have conducted at least $1000$ independent experiments altogether.
As $k=2$, in about $750$ of these experiments $\langle s_1,s_2\rangle=S_n$, and
in about $250$, $\langle s_1,s_2\rangle=A_n$. The few cases where neither $S_n$ nor $A_n$ were
generated were ignored.

Each of these many samples suggests a value for the considered parameter. We thus present
the minimum, average, and maximum observed values (with the average boldfaced).


\subsubsection{Step 1}
The upper bound $n^2$ in the Minimal Cycle Conjecture \ref{MCC} turns out to be an over-estimation
for the number of permutations considered in Step 1. Indeed, except for few cases in $n=8$,
\emph{none of our experiments exceeded this bound}.
Thus, we present in Table \ref{step1} the ratio between the number of permutations actually considered in Step 1 and
the estimation $cn$, which is what one would obtained if the permutations were independent.

\mytab{Ratios for the number of permutations in Step 1.}{step1} {
$S_n$
 & $0.06$ & $0.03$ & $0.02$ & $0.01$ & $0$ & $0$\\
 & $\mathbf{2.26}$ & $\mathbf{2.53}$ & $\mathbf{3.47}$ & $\mathbf{5.05}$ & $\mathbf{5.4}$ & $\mathbf{8.55}$\\
 & $112.13$ & $45.88$ & $25.22$ & $102.81$ & $52.62$ & $77.31$\\
\hline $A_n$
 & $0.04$ & $0.02$ & $0.01$ & $0.01$ & $0.01$ & $0$\\
 & $\mathbf{0.51}$ & $\mathbf{0.51}$ & $\mathbf{1.35}$ & $\mathbf{1.28}$ & $\mathbf{2.56}$ & $\mathbf{1.9}$\\
 & $7.63$ & $4.15$ & $15.5$ & $7.73$ & $12.65$ & $17.5$\\
}

\subsubsection{Length of the final expression}

For $k=2$, the average length of the final expression of the given permutation is estimated in
Section \ref{explen} to be, roughly, below
$$\frac{2}{(c-1)\log(2k-1)}\cdot n^2\log n=
\begin{cases}
\frac{2}{\log 3}\cdot n^2\log n & G=S_n\\
\frac{1}{\log 3}\cdot n^2\log n & G=A_n.
\end{cases}
$$
($\log(3)\approx 1.1$).
Table \ref{finlen} shows that this estimation is surprisingly good,
and that in fact, the true resulting length is on average better than
this bound.

\newcommand{\lmathbf}[1]{\mathbf{#1}\else{#1}}

\mytab{Ratios for the length of the final expression.}{finlen}
{

$S_n$
 & $0.07$          & $0.11$          & $0.10$          & $0.08$          & $0.1$           & $0.1$\\
 & $\mathbf{0.31}$ & $\mathbf{0.45}$ & $\mathbf{0.52}$ & $\mathbf{0.62}$ & $\mathbf{0.63}$ & $\mathbf{0.68}$\\
 & $1.14$          & $0.97$          & $0.98$          & $1.06$          & $0.99$          & $0.95$\\
\hline

$A_n$
 & $0.11$         & $0.08$          & $0.08$         & $0.03$          & $0.02$          & $0.03$\\
 & $\mathbf{0.4}$ & $\mathbf{0.4}$ & $\mathbf{0.53}$ & $\mathbf{0.54}$ & $\mathbf{0.62}$ & $\mathbf{0.59}$\\
 & $0.78$         & $0.87$           & $1.07$        & $0.86$          & $0.9$           & $0.87$\\
}

The actual lengths of the expressions produced for the given permutations are given
in Table \ref{aclen}. For clarity, the average lengths are rounded to the nearest integer.

\newcommand{\spc}{\hspace{-0.8pt}}
\newcommand {\sixs}[6]{{#1\spc#2\spc#3\spc#4\spc#5\spc#6\spc}}
\newcommand {\fives}[5]{{#1\spc#2\spc#3\spc#4\spc#5\spc}}
\newcommand {\fours}[4]{{#1\spc#2\spc#3\spc#4\spc}}
\newcommand {\threes}[3]{{#1\spc#2\spc#3\spc}}
\newcommand {\twos}[2]{{#1\spc#2\spc}}

\mytab{Expression lengths using the generic membership search algorithm.}{aclen}
{
$S_n$
 & $16$ & $148$ & $674$ & $2603$ & $14357$ & $65063$\\
 & $\mathbf{\twos76}$ & $\mathbf{\threes580}$ & $\mathbf{\fours3331}$ & $\mathbf{\fives19078}$ & $\mathbf{\fives91120}$ & $\mathbf{\sixs450450}$\\
 & $275$ & $1258$ & $6344$ & $33015$ & $143344$ & $631306$\\
\hline
$A_n$
 & $13$ & $54$ & $248$ & $504$ & $1640$ & $9258$\\
 & $\mathbf{\twos48}$ & $\mathbf{\threes261}$ & $\mathbf{\fours1698}$ & $\mathbf{\fives8328}$ & $\mathbf{\fives44739}$ & $\mathbf{\sixs195534}$\\
 & $94$ & $564$ & $3454$ & $13328$ & $65354$ & $286628$\\
}

For comparison with earlier methods, we looked for expressions of permutations as short products, using GAP's \cite{GAP07}
Schreier-Sims based algorithm (division off stabilizer chains), which uses optimizations similar to Minkwitz's \cite{Min98}.
Here, we have $100$ experiments for $S_n$ and $100$ experiments for $A_n$.
Already for $n=32$, the routines went out of memory in about $1/3$ of the cases
for $A_n$, and in about $2/3$ of the cases for $S_n$.
Thus, we also checked $n=24$ and $n=28$ ($n=28$ seems to be the
largest index which the routines handle well).
The resulting lengths are shown in Table \ref{GAPlen}, where $\infty$ means
``out of memory in too many cases''.

\newcommand{\mystab}[3]{\begin{table}[!htp]\caption{#1}\label{#2}\begin{tabular}{|c||r|r|r|r|r|}
\hline $n$ & 8 & 16 & 24 & 28 & 32\\ \hline\hline #3 \hline \end{tabular} \end{table}}

\mystab{Expression lengths using previous heuristics (Schreier-Sims-Minkwitz).}{GAPlen}
{
$S_n$
 & $5$  & $102$ & 432 & $1047$ & $\infty$\\
 & $\mathbf{\twos22}$ & $\mathbf{\threes255}$ & $\mathbf{\fours8039}$ & $\mathbf{\sixs345272}$ & $\infty$\\
 & $4$2 & $418$ & 350846 & $32729135$ & $\infty$\\
\hline
$A_n$
 & $0$  &  $95$ & 549 & $913$ & $\infty$\\
 & $\mathbf{\twos18}$ & $\mathbf{\threes238}$ & $\mathbf{\fours4101}$ & $\mathbf{\fives59721}$ & $\infty$\\
 & $29$ & $413$ & 35447 & $4012292$ & $\infty$\\
}

We can see that Schreier-Sims methods are better than ours only for small values of $n$, and that they are not
applicable for large $n$, where our algorithm is easily applicable.
Also, note the large difference between the minimal and the maximal obtained lengths. Contrast this with the results in Table \ref{aclen}.

\section{Possible fixes of the Algebraic Eraser and challenges}

As we have demonstrated, no choice of the security parameters makes the Algebraic Eraser
immune to the attack presented here, as long as the keys are generated by standard distributions.

A possible fix may be to change the group $S$ into one whose
elements do not have short expressions in terms of its generators.
This may force the attacker to attack the original matrices (whose entries are
Laurent polynomials in the variables $t_i$) directly, using linear algebraic
methods similar to the ones presented here. It is not clear to what
extent this can be done.

The most promising way to foil our attacks, at least on a small fraction of keys,
may be to use very carefully designed distributions, which are far from standard ones.
Following our attack, Dorian Goldfeld and Paul Gunnels devised
a distribution for which
the equations in phase 1 of the attack have a huge number of solutions,
most of which not leading to the correct shared key \cite{GoldfeldGunnels}.

Another option would be to work in semigroups, and use noninvertible matrices.
This may foil the first phase of our attack.

\medskip

The generic membership search algorithm is of interest beyond its applicability to
the Algebraic Eraser. We have demonstrated,
based on our Minimal Cycle Conjecture \ref{MCC},
that this algorithm easily solves instances
with random permutations, in groups of index which is intractable
when using previously known techniques like those in \cite{Min98}.
Our extensive experiments, reported above, support this assertion.

The most interesting direction of extending the present work
is proving the Minimal Cycle Conjecture, even with $O(n^2)$ instead
of our $n^2$. In fact, proving any polynomial bound
would imply that the diameter of $S_n$ is almost always $O(n^2\log n)$, which would improve
considerably the presently known bound $n^7(\log n)^{O(1)}$ on the diameter.
Alexander Hulpke has informed us that our methods are similar to ones used for
constructive recognition of $S_n$ or $A_n$. This connection may be useful
for the proposed analysis.

Finally, we point out that even without changes, our algorithm
applies in many cases not treated here, as the experiments of the full attack reported
above show.

{\small
\subsection*{Acknowledgements}
This research was partially supported by the the Oswald Veblen Fund, 
the Emmy Noether Research Institute for Mathematics, and the Minerva Foundation of Germany.
We thank John Dixon for his proof of Theorem \ref{BD}.
We thank L\'aszl\'o Babai, Dorian Goldfeld, Stephen Miller, \'Ak\-os Seress, and Adi Shamir, for fruitful discussions.
A special thanks is owed to Martin Kassabov, for comprehensive suggestions which improved the presentation
of this paper, and for pointing out to us the present version of Step 2 of our algorithm.
This version seems to be folklore, but we have initially used a less efficient variant.
We also thank Alexander Hulpke and Stefan Kohl for useful information about GAP.
Our full attack on the Algebraic Eraser was implemented using MAGMA \cite{Magma}.
}


\end{document}